\numberwithin{equation}{section}
\theoremstyle{plain}
\newtheorem{thm}{Theorem}[section]
\newtheorem{cor}{Corollary}[section]
\newtheorem{lem}{Lemma}[section]
\newtheorem{met}{Method}
\theoremstyle{definition} % I don't want examples and remarks in italic.
\newtheorem{exam}{Example}[section]
\newtheorem{rem}{Remark}[section]
\newcommand{\beao}{\begin{eqnarray*}}
\newcommand{\eeao}{\end{eqnarray*}\noindent}
\newcommand{\beam}{\begin{eqnarray}}
\newcommand{\eeam}{\end{eqnarray}\noindent}
\newcommand{\beqq}{\begin{equation}}
\newcommand{\eeqq}{\end{equation}\noindent}
\newcommand{\bce}{\begin{center}}
\newcommand{\ece}{\end{center}}
\newcommand{\barr}{\begin{array}}
\newcommand{\earr}{\end{array}}
\DeclareMathOperator*{\argmin}{arg\,min}
\begin{document}

\title {Semiparametrically Efficient Estimation of Euclidean Parameters under Equality Constraints}
\author{
Chris A.J. Klaassen and Nanang Susyanto,\\
Korteweg-de Vries Institute for Mathematics \\
University of Amsterdam\\
P.O. Box 94248, 1090 GE Amsterdam, The Netherlands\\
 email: c.a.j.klaassen@uva.nl\\
 email: n.susyanto@uva.nl}
%\date{}

\maketitle

\begin{abstract}
Assume a (semi)parametrically efficient estimator is given of the Euclidean parameter in a (semi)parametric model. A submodel is obtained by constraining this model in that a continuously differentiable function of the Euclidean parameter vanishes. We present an explicit method to construct (semi)parametrically efficient estimators of the Euclidean parameter in such equality constrained submodels and prove their efficiency. Our construction is based solely on the original efficient estimator and the constraining function.

Only the parametric case of this estimation problem and a nonparametric version of it have been considered in literature.
\end{abstract}

\section{Introduction}\label{I}

Let $X_1,\dots,X_n$ be i.i.d. copies of $X$ taking values in the measurable space $({\cal X},{\cal A})$ in a regular semiparametric model with Euclidean parameter $\theta \in \Theta,$ where $\Theta$ is an open subset of $\mathbb{R}^k.$ We denote this semiparametric model by
\begin{equation}\label{model}
{\cal P}=\left\{P_{\theta, G}\ :\ \theta \in \Theta,\  G \in {\cal
G} \right\}.
\end{equation}
Typically, the nuisance parameter space $\cal G$ is a subset of a
Banach or Hilbert space. If this space is finite dimensional, we are dealing with a parametric model.

We assume an asymptotically efficient estimator $\hat \theta_n=\hat \theta_n(X_1,\dots,X_n)$ is given of the parameter of interest $\theta,$ which under regularity conditions means that
\begin{equation}\label{defefficientestimator}
\sqrt n\left(\hat \theta_n-\theta-\frac1n \sum\limits_{i=1}^n {\tilde \ell}(X_i;\theta,G,{\cal P})\right) \rightarrow_{P_{\theta,G}} 0
\end{equation}
holds. Here ${\tilde \ell}(\cdot;\theta,G,{\cal P})$ is the efficient influence function for estimation of $\theta$ within $\cal P$ and
\begin{equation}\label{matrixboundP}
I^{-1}(\theta,G,{\cal P})=\int_{\cal X}{\tilde \ell}(x;\theta,G,{\cal P}){\tilde \ell}^T\!(x;\theta,G,{\cal P})dP_{\theta,G}(x)
\end{equation}
is the information bound, which corresponds to the efficient information matrix $I(\theta,G,{\cal P}).$

Quite frequently the elements of the parameter of interest $\theta=(\theta_1,\dots,\theta_k)$ are not mathematically independent but satisfy $d$ functional relationships $S_i(\theta) =0,\ i=1,\dots,d,$ with $d<k.$ Formally, this can be described as
\begin{equation}\label{restrictionfunctionS}
S(\theta) =0 ,\quad \theta \in \Theta,
\end{equation}
where $S$ is a function from $\mathbb{R}^k$  to $\mathbb{R}^d.$ We will assume that the $d \times k$ Jacobian matrix ${\dot S}(\cdot)$ exists, is continuous in $\theta$ on $\Theta,$ and has full rank $d.$
Thus, we have constrained the semiparametric model $\cal P$ to a semiparametric
submodel of it, namely
\begin{equation}\label{submodel}
{\cal Q}=\left\{P_{\theta, G}\, :\, S(\theta)=0,\ \theta \in \Theta,\ G \in {\cal G} \right\}.
\end{equation}
Given the constraint $S(\theta)=0$,
we will adapt the semiparametrically efficient estimator $\hat \theta_n$ of
$\theta$  within $\cal P$ in such a way that the adapted estimator is semiparametrically efficient within the constrained model $\cal Q$. Of course, it has to have at least as small asymptotic variance as the original estimator $\hat\theta_n$ and to be at least as close to the true value stochastically.

Efficient estimation of Euclidean parameters under equality constraints for {\em nonparametric} models has been studied in \cite{Levit75}, \cite{KoshevnikLevit76}, \cite{Haberman84}, \cite{Sheehy88}, in Example 1.3.6, 3.2.3, and 3.3.3 of Bickel et al. (1993), henceforth called \cite{BKRW93}, in \cite{MuellerWefelmeyer02}, and in \cite{BroniatowskiKeziou12}. In \cite{BKRW93} nonparametric models under equality constraints are called constraint defined models. Let the semiparametric model $\cal P$ be embedded into a nonparametric model $\tilde{\cal P}$ and let the map $\nu : {\tilde{\cal P}} \to {\mathbb R}^k$ be such that $\nu(P_{\theta,G}) = \theta$ holds for all $P_{\theta, G} \in {\cal P}.$ In view of ${\cal P} \subset {\tilde{\cal P}}$ estimation of $\nu(P)$ within $\cal P$ is easier than within ${\tilde{\cal P}}.$ This relation between these models also holds under the equality constraint $S(\nu(P))= 0.$ Consequently, the results for nonparametric models under constraints are not directly applicable to our semiparametric situation.

For the constrained parametric estimation problem so-called restricted maximum likelihood estimators have been studied. \cite{AitchisonSilvey58} have used Lagrange multipliers with an iterative computation method. An alternative iterative construction has been proposed by \cite{Jamshidian04}, who also presents a long list of examples of constrained parametric estimation problems. To prove efficiency of these restricted maximum likelihood estimators additional regularity conditions are needed. Our method does not need these additional conditions, provided an efficient estimator for the original unconstrained parametric model is given. Finite sample Cram\'er-Rao bounds for the constrained parametric case have been derived by e.g. \cite{GormanHero90}, \cite{Marzetta93}, and \cite{Stoica98}.

To the best of our knowledge the semiparametric version of the topic of the present paper has not been studied in literature yet.

Estimation of the Euclidean parameters constrained by equalities is quite different from estimation of parameters constrained by {\em inequalities}. A comprehensive treatment of the latter estimation problems may be found in \cite{Eeden06}.

If $\cal Q$ can be reparametrized as
\begin{equation}\label{Qreparametrized}
{\cal Q}=\left\{P_{f(\nu), G}\, :\, \nu \in N,\ G \in {\cal G} \right\},
\end{equation}
where $N$ is open and $f\,:\, N \to \Theta$ is injective and continuously differentiable with full rank Jacobian, then $\nu$ can be estimated semiparametrically efficiently as in \cite{KlaassenSusyanto15} and, as noted there, $\theta$ can be estimated efficiently as well by applying $f(\cdot)$ to the efficient estimator of $\nu.$ However, it may be hard or even impossible to find such a reparametrization. A simple, formal example is estimation of the mean vector of a bivariate normal distribution where it is known that this mean vector lies on the unit circle. The unit circle cannot be parametrized as in (\ref{Qreparametrized}) with $N$ open and $f(\cdot)$ continuous and injective. Indeed, assume $\nu_n \in N$ converge to a point at the boundary of $N.$ Then $f(\nu_n)$ converge to a point on the unit circle $f(\nu_0),$ say, with $\nu_0 \in N.$ But by the continuity of $f(\cdot)$ this implies that there exist a point in $N$ close to the boundary of $N$ and a point in $N$ close to $\nu_0$ that are mapped on the same point of the circle by $f(\cdot),$ which contradicts its injectivity. On the other hand there are submodels $\cal Q$ of the type (\ref{Qreparametrized}) that cannot be viewed as a submodel of the type (\ref{submodel}). Again consider estimation of the mean vector of a bivariate normal distribution where it is known now that this mean vector lies on the unit circle with one point removed. This unit circle with one point removed can be parametrized as in (\ref{Qreparametrized}) with $f(\cdot)$ continuous and $N$ open, but it cannot be described via (\ref{restrictionfunctionS}), since the preimage of the closed set $\{0\}$ under a continuous function $S(\cdot)$ has to be closed and  the unit circle with one point removed is not. In the present paper, everything will be done directly to the original parameter subject to equality constraints without reparametrizing it.

The outline of the paper is as follows. In Section \ref{EIFP}, we will present a lower bound to the efficient information bound for estimating the parameter of interest within the constrained model $\cal Q.$ This lower bound will be formulated in terms of the efficient information bound of the original model $\cal P$ and the Jacobian of the constraining function $S(\cdot).$ An explicit estimator that is efficient within the constrained model, will be given in Section \ref{EEEC}. It attains the lower bound from Section \ref{EIFP}, which shows that both this information bound and the estimator are efficient within the constrained model. Examples are discussed in Section \ref{E}. Our conclusions are presented in Section \ref{conclusion}.

\section{Efficient Influence Functions and Projection}\label{EIFP}

In the situation of Section \ref{I} we denote the so-called {\em efficient score function} for $\theta$ by
\begin{equation}\label{efficientscorefunction}
\ell^*(\cdot;\theta,G,{\cal P}) = I(\theta,G,{\cal P}){\tilde \ell}(\cdot;\theta,G,{\cal P}).
\end{equation}
We will restrict attention to regular semiparametric models for which at every $P_0 = P_{\theta_0, G_0} \in {\cal P}$ the parameter $\theta$ is pathwise differentiable, the tangent space $\dot{\cal P}$ is the sum of the tangent space ${\dot{\cal P}}_1$ for $\theta$ and the tangent space ${\dot{\cal P}}_2$ for $G$, and the efficient score function $\ell^*(\cdot;\theta,G,{\cal P})$ for $\theta$  is the projection of the (ordinary) score function ${\dot \ell}(\cdot;\theta,G,{\cal P})$ for $\theta$ on the orthocomplement of ${\dot{\cal P}}_2$ within $\dot{\cal P}$ in the sense of componentwise projection within $L_2^0(P_0)=\{f \in L_2(P_0):E_{P_0}f(X)=0\};$ for details see Chapter 3 of \cite{BKRW93} and Chapter 25 of \cite{Vaart98}.

By Proposition 3.3.1 of \cite{BKRW93} the efficient influence function ${\tilde \ell}(\cdot;\theta,G,{\cal Q})$ for $\theta$ within the submodel $\cal Q$ can be obtained by projecting the efficient influence function ${\tilde \ell}(\cdot;\theta,G,{\cal P})$ for $\theta$ within $\cal P$ onto the tangent space $\dot{\cal Q}$ of $\cal Q$ or onto an appropriate subspace of this tangent space.

Let $\{ \theta_\eta \,:\, \theta_\eta \in {\mathbb R}^k,\ \eta \in {\mathbb R},\ |\eta| < \epsilon\}$ for sufficiently small $\epsilon >0$ be a path through $\theta_0$ in ${\mathbb R}^k$ in the direction $r \in {\mathbb R}^k,$ which means that $|\theta_\eta - \theta_0 - \eta r | = o(|\eta|).$ If this path satisfies $S(\theta_\eta)=0,\, |\eta|<\epsilon,$ then the differentiability of $S(\cdot)$ at $\theta_0$ implies $|S(\theta_\eta) - S(\theta_0) - \eta {\dot S}(\theta_0)r| = o(|\eta|),$ meaning $|\eta {\dot S}(\theta_0)r| = o(|\eta|),$ and hence ${\dot S}(\theta_0)r = 0.$ In other words, such a path within the parameter set $\{ \theta \,:\, S(\theta) =0,\, \theta \in {\mathbb R}^k\},$ has a direction $r$ at $\theta_0$ that belongs to the orthocomplement of the $d$-dimensional linear space within ${\mathbb R}^k$ spanned by the $d$ row vectors of the Jacobian matrix ${\dot S}(\theta_0).$
In fact, to each element of this orthocomplement $[{\dot S}(\theta_0)]^\perp$ corresponds such a path, as is proved in detail in Appendix \ref{EPGD} with the help of the implicit function theorem.

 With $P_0 \in {\cal Q}$ let $L$ be a $k\times(k-d)$-matrix, whose columns span this $(k-d)$-dimensional orthocomplement. Since $\cal P$ is a regular semiparametric model, the parametric submodel ${\cal P}_1 = \{P_{\theta,G_0}\,:\, \theta \in \Theta \}$ is regular. With $s(\theta)$ denoting the square root of the density of $P_{\theta,G_0}$ with respect to an appropriate dominating measure $\mu,$ this regularity implies
\begin{equation}
|| s(\theta_\eta) - s(\theta_0) - \tfrac 12 s(\theta_0) (\theta_\eta - \theta_0)^T {\dot \ell}(\theta_0)||_\mu = o(|\theta_\eta - \theta_0|),\quad \theta_\eta \to \theta_0,
\end{equation}
where $||\cdot||_\mu$ is the norm of $L_2(\mu)$ and ${\dot \ell}(\theta_0) = {\dot \ell}(\cdot;\theta_0,G_0,{\cal P})$ is the score function for $\theta$ at $\theta_0$; cf. Definition 2.1.1 and formula (2.1.4) of \cite{BKRW93}. For a path $\{ \theta_\eta \,:\, \theta_\eta \in {\mathbb R}^k,\ \eta \in {\mathbb R},\ |\eta| < \epsilon\}$ with direction $r$ at $\theta_0$ as above, this implies
\begin{equation}
|| s(\theta_\eta) - s(\theta_0) - \tfrac 12 \eta s(\theta_0) r^T{\dot \ell}(\theta_0) ||_\mu = o(|\eta|),\quad \eta \to 0.
\end{equation}
Consequently, we are dealing here with a 1-dimensional regular parametric model with score function
$r^T{\dot \ell}(\theta_0)$ for $\eta$ at $\eta=0.$ It follows that the closed linear span $\left[ L^T \dot{\ell}(\theta_0) \right]$ of all such score functions $r^T{\dot \ell}(\theta_0)$ is the tangent space ${\dot{\cal Q}}_1$ of ${\cal Q}_1 = \{P_{\theta,G_0}\,:\, S(\theta)=0,\ \theta \in \Theta \}$ at $P_0.$ This implies that the tangent space $\dot{\cal Q}$ of $\cal Q$ at $P_0$ contains both $\left[ L^T \dot{\ell}(\theta_0) \right]$ and ${\dot{\cal P}}_2.$ Writing $\ell^*(\theta_0)$ for $\ell^*(\cdot; \theta_0,G_0,{\cal P})$ we have  for every tangent $t \in {\dot{\cal P}}_2$
\begin{equation}\label{tangents}
r^T{\dot \ell}(\theta_0) + t = r^T \ell^*(\theta_0) + t + r^T\left({\dot \ell}(\theta_0) - \ell^*(\theta_0)\right).
\end{equation}
Since $\ell^*(\theta_0)$ is the componentwise projection of ${\dot \ell}(\theta_0)$ on the orthocomplement of ${\dot{\cal P}}_2,$ each component of ${\dot \ell}(\theta_0) - \ell^*(\theta_0)$ belongs to ${\dot{\cal P}}_2$ and we obtain from (\ref{tangents})
\begin{equation}\label{tangentspaces}
\dot{\cal Q} \supset \left[ L^T \dot{\ell}(\theta_0) \right] + \dot{\cal P}_2 = \left[ L^T \ell^*(\theta_0) \right] + \dot{\cal P}_2 \supset \left[ L^T \ell^*(\theta_0) \right].
\end{equation}
Taking $\theta = \theta_0$  in formula (\ref{efficientscorefunction}) and suppressing $\theta_0$ and $\cal P$ from the notation we rewrite (\ref{tangentspaces}) as
\begin{equation}\label{tangentspaces2}
\dot{\cal Q} \supset \left[ L^T \dot{\ell} \right] + \dot{\cal P}_2 = \left[ L^T \ell^* \right] + \dot{\cal P}_2 \supset \left[ L^T \ell^* \right] = \left[ L^T I {\tilde \ell} \right].
\end{equation}
We shall denote the componentwise inner product within $L_2^0(P_0)$ by $<\cdot,\cdot>_0$ and the projection within $L_2^0(P_0)$ of the efficient influence function $\tilde \ell$ into $\left[ L^T I {\tilde \ell} \right]$ by
\begin{equation}\label{projection}
\Pi_0\left( {\tilde \ell}\ |\, \left[ L^T I {\tilde \ell} \right] \right) = A L^T I {\tilde \ell},
\end{equation}
where $A$ is a $k \times (k-d)$-matrix. Since ${\tilde \ell} - \Pi_0\left( {\tilde \ell}\ |\, \left[ L^T I {\tilde \ell} \right] \right)$ has to be orthogonal to $\left[ L^T I {\tilde \ell} \right],$ i.e., since
\begin{eqnarray}\label{orthogonality}
\left< {\tilde \ell} - A L^T I {\tilde \ell} \,,\,{\tilde \ell}^T I L \right>_0 = I^{-1} I L - A L^T I I^{-1} I L = 0
\end{eqnarray}
holds, we have
\begin{equation}\label{projection2}
\Pi_0\left( {\tilde \ell}\ |\, \left[ L^T I {\tilde \ell} \right] \right) = L \left(L^T I L \right)^{-1} L^T I {\tilde \ell} .
\end{equation}
In order to write this projection in terms of ${\dot S} = {\dot S}(\theta_0)$ we note that according to the Appendix of \cite{KlaassenSusyanto15} $L(L^T I L)^{-1} L^T I + I^{-1} {\dot S}^T ({\dot S} I^{-1} {\dot S}^T )^{-1} {\dot S}$ is the identity map, which implies
\begin{equation}\label{projection3}
\Pi_0\left( {\tilde \ell}\ |\, \left[ L^T I {\tilde \ell} \right] \right) = {\tilde \ell} - I^{-1} {\dot S}^T ({\dot S} I^{-1} {\dot S}^T )^{-1} {\dot S}{\tilde \ell}.
\end{equation}
By Theorem 3.3.2.A of \cite{BKRW93} and formula (3.3.27) in particular, this implies that the limit distribution under $P_0$ of any properly normalized regular estimator of $\theta$ within the submodel $\cal Q$ is the convolution of a normal distribution with mean 0 and covariance matrix
\begin{equation}\label{limitcovariance}
L \left(L^T I L \right)^{-1} L^T = I^{-1} - I^{-1} {\dot S}^T ({\dot S} I^{-1} {\dot S}^T )^{-1} {\dot S}I^{-1}
\end{equation}
and some other distribution. In the next Section we shall construct an estimator of $\theta$ within $\cal Q$ that is asymptotically linear in the influence function from (\ref{projection3}). Consequently, it is asymptotically normal with minimal covariance matrix, i.e.,
\begin{equation}\label{limitbehavior}
\sqrt{n}\left( {\tilde \theta} - \theta_0 \right) \rightarrow_{P_0} {\cal N} \left(0, I^{-1} - I^{-1} {\dot S}^T ({\dot S} I^{-1} {\dot S}^T )^{-1} {\dot S}I^{-1} \right)
\end{equation}
holds.

\section{Efficient Estimator under Equality Constraints}\label{EEEC}

Note that $S(\theta) = S(\theta) - S(\theta_0) = {\dot S}(\theta_0)(\theta - \theta_0) + o(|\theta - \theta_0|)$ holds for $\theta_0$ with $S(\theta_0)=0.$ Since an efficient estimator ${\hat \theta}_n$ within $\cal P$ is asymptotically linear in the efficient influence function ${\tilde \ell}(\cdot; \theta,G,{\cal P}),$ this implies that $S({\hat \theta}_n)$ is asymptotically linear in the influence function ${\dot S}(\theta_0){\tilde \ell}(\cdot; \theta_0,G_0,{\cal P})$ under $\theta_0.$ In order to construct an efficient estimator of $\theta$ within $\cal Q$ we will use this asymptotic linearity.

Our main result reads as follows.
\begin{thm}\label{theoryefficiency}
Consider the regular semiparametric model $\cal P$ and its submodel $\cal Q$ given by \eqref{model} and \eqref{submodel}, respectively. Assume that $S\,:\, {\mathbb R}^k \to {\mathbb R}^d, d < k,$ is continuously differentiable with Jacobian matrix ${\dot S}(\cdot)$ of full rank $d,$ and that the tangent spaces satisfy the conditions mentioned in the first paragraph of Section \ref{EIFP}. Let $X_1,\ldots,X_n$ be i.i.d. with distribution $P \in \cal P$ and suppose that $\hat\theta_n$ is an efficient estimator of the parameter of interest $\theta$ within $\cal P$ based on $X_1,\ldots,X_n$ with efficient influence function $\tilde \ell(\cdot;\theta,G,{\cal P})$ and that ${\hat I}_n$ is a consistent estimator of $I(\theta,G,{\cal P})$ from \eqref{matrixboundP}. Write
\begin{equation}\label{efficientestimator0}
\theta_n^* = \hat\theta_n-{\hat I}_n^{-1}{\dot S}^T\!(\hat\theta_n)\left({\dot S}(\hat\theta_n) {\hat I}_n^{-1}
{\dot S}^T\!(\hat\theta_n) \right)^{-1}S(\hat\theta_n)
\end{equation}
and define
\begin{equation}\label{efficientestimator}
{\tilde \theta}_n = \argmin_{\zeta,\, S(\zeta)=0} \parallel \zeta - \theta_n^* \parallel
\end{equation}
with $\parallel \cdot \parallel$ the Euclidean norm or a topologically equivalent norm.
Then ${\tilde \theta}_n$ efficiently estimates $\theta$ within the submodel $\cal Q$ with efficient influence function
\begin{eqnarray}\label{influencefunctioninQ}
\lefteqn{\tilde\ell(\cdot;\theta,G,{\cal Q}) = \tilde \ell(\cdot;\theta,G,{\cal P})}\\
 && - I^{-1}(\theta,G,{\cal P}) {\dot S}^T\!(\theta) \left({\dot S}(\theta) I^{-1}(\theta,G,{\cal P}) {\dot S}^T\!(\theta) \right)^{-1} {\dot S}(\theta)\tilde \ell(\cdot;\theta,G,{\cal P}) \nonumber
\end{eqnarray}
and hence it satisfies \eqref{limitbehavior}. Furthermore,
\begin{equation}\label{equivalence}
{\sqrt n}({\tilde \theta}_n - \theta_n^*) \rightarrow_{P_{\theta,G}} 0
\end{equation}
holds.
\end{thm}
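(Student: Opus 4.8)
The plan is to split the argument into three parts matching the three assertions of the theorem: (i) $\theta_n^*$ is already asymptotically linear in the right influence function $\tilde\ell(\cdot;\theta,G,{\cal Q})$ of \eqref{influencefunctioninQ}, (ii) the final projection step producing $\tilde\theta_n$ changes it only by $o_P(n^{-1/2})$, i.e.\ \eqref{equivalence} holds, and (iii) conclude efficiency within $\cal Q$ by invoking the convolution theorem machinery already set up in Section \ref{EIFP}. The bulk of the work is in (i) and (ii).

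For step (i), I would start from the assumed asymptotic linearity of $\hat\theta_n$, namely \eqref{defefficientestimator}, and expand each factor in \eqref{efficientestimator0} around the truth. Write $\hat\theta_n = \theta + n^{-1}\sum_i \tilde\ell(X_i;\theta,G,{\cal P}) + o_P(n^{-1/2})$. By the first-order Taylor expansion $S(\hat\theta_n) = \dot S(\theta)(\hat\theta_n-\theta) + o_P(|\hat\theta_n-\theta|) = \dot S(\theta)(\hat\theta_n-\theta)+o_P(n^{-1/2})$, using $S(\theta)=0$ and $\sqrt n$-consistency of $\hat\theta_n$. Continuity of $\dot S(\cdot)$ and consistency of $\hat I_n$ give $\dot S(\hat\theta_n)\to_P \dot S(\theta)$ and $\hat I_n^{-1}\to_P I^{-1}(\theta,G,{\cal P})$, and the full-rank assumption makes the sandwiched inverse $(\dot S \hat I_n^{-1}\dot S^T)^{-1}$ continuous and hence consistent for $(\dot S I^{-1}\dot S^T)^{-1}$. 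Plugging these in and collecting terms shows
\begin{eqnarray*}
\theta_n^* &=& \theta + \tfrac1n\sum_{i=1}^n \Big[ \tilde\ell(X_i;\theta,G,{\cal P}) \\
&& {} - I^{-1}(\theta,G,{\cal P})\dot S^T(\theta)\big(\dot S(\theta) I^{-1}(\theta,G,{\cal P})\dot S^T(\theta)\big)^{-1}\dot S(\theta)\tilde\ell(X_i;\theta,G,{\cal P})\Big] + o_P(n^{-1/2}),
\end{eqnarray*}
which is exactly asymptotic linearity in $\tilde\ell(\cdot;\theta,G,{\cal Q})$ of \eqref{influencefunctioninQ}. (One must be slightly careful that the $o_P(|\hat\theta_n-\theta|)$ remainder from the Taylor expansion, after multiplication by the bounded-in-probability matrix factors, is genuinely $o_P(n^{-1/2})$; this is where $\sqrt n$-consistency of $\hat\theta_n$, which follows from \eqref{defefficientestimator} and the CLT, is used.)

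For step (ii), I would first observe that $\theta_n^*$ by construction \emph{nearly} satisfies the constraint: $S(\theta_n^*) = S(\hat\theta_n) - \dot S(\hat\theta_n)\hat I_n^{-1}\dot S^T(\hat\theta_n)(\dot S(\hat\theta_n)\hat I_n^{-1}\dot S^T(\hat\theta_n))^{-1}S(\hat\theta_n) + o_P(|\theta_n^*-\hat\theta_n|) = o_P(n^{-1/2})$, the leading terms cancelling exactly and the remainder being controlled by a Taylor expansion of $S$ at $\hat\theta_n$ together with $\|\theta_n^*-\hat\theta_n\| = O_P(n^{-1/2})$. Then $\tilde\theta_n$ is the Euclidean projection of $\theta_n^*$ onto the level set $\{S=0\}$; since $\theta_n^*$ is within $O_P(n^{-1/2})$ of that set, and the set is a smooth $(k-d)$-manifold near $\theta$ (the implicit function theorem, full rank of $\dot S$, as already used in Appendix \ref{EPGD}), the nearest point on it satisfies $\|\tilde\theta_n-\theta_n^*\| \le \mathrm{dist}(\theta_n^*,\{S=0\})$, and a standard quantitative inverse/implicit function argument bounds this distance by a constant times $\|S(\theta_n^*)\| = o_P(n^{-1/2})$ in a neighbourhood of $\theta$. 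This proves \eqref{equivalence}. Combining with step (i), $\tilde\theta_n$ is asymptotically linear in $\tilde\ell(\cdot;\theta,G,{\cal Q})$, hence satisfies \eqref{limitbehavior}.

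For step (iii), efficiency within $\cal Q$ now follows because \eqref{limitbehavior} shows $\tilde\theta_n$ attains the covariance bound $L(L^TIL)^{-1}L^T = I^{-1} - I^{-1}\dot S^T(\dot S I^{-1}\dot S^T)^{-1}\dot S I^{-1}$ derived in \eqref{limitcovariance} from Theorem 3.3.2.A of \cite{BKRW93}; an estimator that is asymptotically linear in the projected efficient influence function and attains the convolution-theorem bound is efficient, and its influence function is necessarily the efficient influence function $\tilde\ell(\cdot;\theta,G,{\cal Q})$. The main obstacle I expect is step (ii): making the "projection onto a nearby smooth manifold moves you by at most the distance to the manifold, which is $O(\|S\|)$" argument fully rigorous and \emph{uniform} enough in a shrinking neighbourhood of $\theta$ to convert the $o_P(n^{-1/2})$ bound on $S(\theta_n^*)$ into an $o_P(n^{-1/2})$ bound on $\|\tilde\theta_n-\theta_n^*\|$ — in particular handling the (unlikely but possible) event that the argmin in \eqref{efficientestimator} is not unique or escapes the neighbourhood, which one dismisses using $\theta_n^* \to_P \theta$ and compactness of a small closed ball around $\theta$.
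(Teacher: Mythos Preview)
Your proposal is correct. Steps (i) and (iii) match the paper's proof: the paper makes your expansion in step (i) explicit via a three-term decomposition $R_{1,n}-R_{2,n}-R_{3,n}$ isolating the linearity remainder of $\hat\theta_n$, the linearity remainder of $S(\hat\theta_n)$, and the estimation error in the matrix prefactor, but this is just your ``plugging in and collecting terms'' written out in full.

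The genuine difference is in step (ii). You argue that $S(\theta_n^*)=o_P(n^{-1/2})$ by exact cancellation in a Taylor expansion of $S$ at $\hat\theta_n$, and then invoke a quantitative implicit-function bound $\mathrm{dist}(\zeta,\{S=0\})\le C\|S(\zeta)\|$ near $\theta_0$ to turn smallness of $S(\theta_n^*)$ into smallness of the distance to the constraint set. The paper instead exhibits an explicit competitor on $\{S=0\}$: by step (i), $\theta_n^*-\theta_0$ equals, up to $o_P(n^{-1/2})$, the sample mean of $\tilde\ell(\cdot;\theta_0,G_0,\mathcal{Q})$, and this mean lies in $[L]=[\dot S(\theta_0)]^\perp$ by \eqref{projection2}; the local parametrization of $\{S=0\}$ from Appendix \ref{EPGD} then yields a point $\zeta_n\in\{S=0\}$ with $\|\zeta_n-\theta_n^*\|=o_P(n^{-1/2})$, and minimality gives $\|\tilde\theta_n-\theta_n^*\|\le\|\zeta_n-\theta_n^*\|$. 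The paper's route sidesteps your self-identified ``main obstacle'' (uniformity of the Lipschitz-type distance bound) by recycling step (i) together with the linear-algebraic fact that the $\mathcal{Q}$-influence function takes values in $[L]$; your route is more directly geometric and does not need to revisit the influence-function structure, at the cost of the extra computation of $S(\theta_n^*)$ and the quantitative inverse-function lemma.
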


\begin{proof}
In view of the convolution result proved in Section \ref{EIFP} (cf. (\ref{limitcovariance})) it suffices to show that ${\tilde \theta}_n$ is asymptotically linear in the influence function from \eqref{influencefunctioninQ}, since this yields both sharpness of the convolution bound and efficiency of the estimator.
Fix $\theta_0$ with $S(\theta_0)=0$ and $P_0=P_{\theta_0,G_0} \in {\cal Q},$ and write
\begin{eqnarray}\label{prooflinearity}
\lefteqn{{\sqrt n}\left( \theta_n^* - \theta_0 - \tfrac 1n \sum_{i=1}^n \tilde\ell(X_i;\theta_0,G_0,{\cal Q}) \right) \nonumber} \\
&& = {\sqrt n}\left( {\hat \theta}_n - \theta_0 - \tfrac 1n \sum_{i=1}^n \tilde\ell(X_i;\theta_0,G_0,{\cal P}) \right) \nonumber \\
&& \quad -\, {\hat I}_n^{-1}{\dot S}^T\!(\hat\theta_n) \left({\dot S}(\hat\theta_n) {\hat I}_n^{-1}
{\dot S}^T\!(\hat\theta_n) \right)^{-1} \nonumber \\
&& \qquad \quad \times{\sqrt n} \left(S(\hat\theta_n) - \tfrac 1n \sum_{i=1}^n {\dot S}(\theta_0)\tilde \ell(X_i;\theta_0,G_0,{\cal P}) \right) \\
&& \quad - \left( {\hat I}_n^{-1}{\dot S}^T\!(\hat\theta_n)\left({\dot S}(\hat\theta_n) {\hat I}_n^{-1}
{\dot S}^T\!(\hat\theta_n)\right)^{-1} \right. \nonumber \\
 && \left. \qquad \qquad -  I^{-1}(\theta_0,G_0,{\cal P}) {\dot S}^T\!(\theta_0) \left({\dot S}(\theta_0) I^{-1}(\theta_0,G_0,{\cal P}) {\dot S}^T\!(\theta_0) \right)^{-1} \right){\dot S}(\theta_0) \nonumber \\
&& \qquad \quad \times \tfrac 1{\sqrt n} \sum_{i=1}^n \tilde \ell(X_i;\theta_0,G_0,{\cal P}) \nonumber \\
&& = R_{1,n} - R_{2,n} - R_{3,n}. \nonumber
\end{eqnarray}
The asymptotic linearity of ${\hat \theta}_n$ from \eqref{defefficientestimator} implies that $R_{1,n}$ converges to 0 in probability under $P_0.$ By the central limit theorem the second factor of $R_{3,n}$ is asymptotically normal with mean 0 and covariance matrix $I^{-1}(\theta_0,G_0,{\cal P})$ from \eqref{matrixboundP}. Since ${\dot S}(\cdot)$ is continuous and ${\hat I}_n$ and ${\hat \theta}_n$ are consistent in estimating $I(\theta_0,G_0,{\cal P})$ and $\theta_0,$ respectively,
this implies that $R_{3,n}$ converges to 0 in probability under $P_0$ as well. We also conclude that the first factor of $R_{2,n}$ is bounded in probability. Together with the asymptotic linearity of $S({\hat \theta}_n),$ as noted at the start of this Section, this yields the convergence of $R_{2,n}$ to 0 in probability under $P_0.$

It remains to be shown that \eqref{equivalence} holds. In view of $S(\theta_0)=0$ and Appendix \ref{EPGD} we may parametrize a part of the zero set of $S(\cdot)$ near $\theta_0$ by
\begin{equation}\label{S0}
{\cal S}_0 = \left\{ \theta \,|\, \theta = \theta_0 + L\eta + r(\eta),\ \eta \in H \right\},
\end{equation}
where the $k-d$ columns of the matrix $L$ span the orthocomplement of $[{\dot S}(\theta_0)],$ $r(\eta) = o(\parallel \eta \parallel)$ holds as $\parallel \eta \parallel$ tends to 0, and $H$ is an appropriate neighborhood of 0 within ${\mathbb R}^{k-d}.$ Note that $n^{-1} \sum_{i=1}^n {\tilde \ell}(X_i;\theta_0,G_0, {\cal Q})$ is of the order $O_p(1/{\sqrt n})$ under $P_0$ and takes its values in $[L]$ in view of \eqref{projection2}. Together with \eqref{S0} this shows that there exists a random $k$-vector ${\tilde R}_n = o_p(1/{\sqrt n})$ such that
\begin{equation}\label{Rtilde}
\theta_0 + \tfrac 1n \sum_{i=1}^n {\tilde \ell}(X_i;\theta_0,G_0, {\cal Q}) + {\tilde R}_n \in {\cal S}_0
\end{equation}
holds with probability tending to 1. Because of the definition of ${\tilde \theta}_n,$ the triangle inequality, and the asymptotic linearity of $\theta_n^*$ in the efficient influence function as proved above, this yields
\begin{eqnarray}\label{triangle}
\lefteqn{\parallel {\tilde \theta}_n - \theta_n^* \parallel \nonumber } \\
&& \leq \parallel \theta_0 + \tfrac 1n \sum_{i=1}^n {\tilde \ell}(X_i;\theta_0,G_0, {\cal Q}) + {\tilde R}_n - \theta_n^* \parallel \nonumber \\
&& \leq \parallel \theta_0 + \tfrac 1n \sum_{i=1}^n {\tilde \ell}(X_i;\theta_0,G_0, {\cal Q})- \theta_n^* \parallel + \parallel {\tilde R}_n \parallel \\
&& =  o_p \left( \tfrac 1{\sqrt n} \right), \nonumber
\end{eqnarray}
which proves  \eqref{equivalence}.
\end{proof}

\begin{rem}\label{estimatorI}
Consistent estimators ${\hat I}_n$ of $I(\theta,G,{\cal P})$ may be constructed from ${\hat \theta}_n$ as in Section 4 of \cite{KlaassenSusyanto15}. In regular parametric cases the Fisher information $I(\theta)= I(\theta,G,{\cal P})$ depends on $\theta$ only and is continuous in it. Consequently, ${\hat I}_n = I({\hat \theta}_n)$ is consistent in estimating $I(\theta)$ then.
\end{rem}

\begin{rem}\label{bothestimators}
According to Theorem \ref{theoryefficiency} the estimators $\theta_n^*$ and ${\tilde \theta}_n$ have the same asymptotic performance to first order. However, only ${\tilde \theta}_n$ is guaranteed to be efficient within $\cal Q,$ since $\theta_n^*$ need not be a zero of $S(\cdot).$ In order to compute ${\tilde \theta}_n$ to the desired order of precision one typically needs an iterative numerical procedure, like Newton-Raphson.
\end{rem}

\begin{rem}\label{remarklinear}
Parametrize the linear case by $S(\theta)=R^T(\theta-\alpha)$ with $R$ a $d \times k$-matrix and $\alpha$ a fixed $k$-vector. Now, ${\dot S}(\theta) = R^T$ holds and the estimator from \eqref{efficientestimator} reduces to
\begin{equation}\label{efficientestimatorlinearcase}
\tilde\theta_n=\hat\theta_n-{\hat I}_n^{-1}R\left(R^T {\hat I}_n^{-1}R \right)^{-1}R^T \left(\hat\theta_n - \alpha \right).
\end{equation}
In terms of a $k \times (k-d)$-matrix $L,$ whose columns span the orthocomplement of $[{\dot S}^T\!(\theta)]=[R],$ this estimator may be written as
\begin{equation}\label{efficientestimatorlinearcase2}
\tilde\theta_n=\alpha + L \left(L^T {\hat I}_n L \right)^{-1}L^T {\hat I}_n \left(\hat\theta_n - \alpha \right)
\end{equation}
according to the Appendix of \cite{KlaassenSusyanto15}. Note that this estimator attains the asymptotic information bound
\begin{equation}\label{informationboundlinearcase}
L \left(L^T I(\theta, G, {\cal P}) L \right)^{-1}L^T.
\end{equation}
Comparing their formula (4.18) to \eqref{efficientestimatorlinearcase2} above we note that the approaches of the present paper and of \cite{KlaassenSusyanto15} yield exactly the same estimator in the linear case, although the approaches differ in the general case.
\end{rem}

\begin{rem}
The estimators ${\tilde \theta}_n$ and ${\hat \theta}_n$ are efficient within the models $\cal Q$ and $\cal P,$ respectively. Since $\cal Q$ is a submodel of $\cal P,$ it is easier to estimate $\theta$ within $\cal Q$ than within $\cal P.$ This is visible in the respective limit distributions by comparing \eqref{defefficientestimator} and \eqref{matrixboundP} to \eqref{limitbehavior}. The difference between the two limit covariance matrices is $I^{-1} {\dot S}^T ({\dot S} I^{-1} {\dot S}^T )^{-1} {\dot S}I^{-1},$ which is positive semidefinite because of the nonsingularity of the symmetric information matrix $I,$ the maximum rank of $\dot S,$ and the fact that the inverse of a symmetric positive definite matrix is also symmetric positive definite.

By Theorem \ref{theoryefficiency}, \eqref{projection2}, and \eqref{projection3} we have
\begin{equation}\label{projection4}
{\tilde \theta}_n - \theta_0 = L \left(L^T I L \right)^{-1}L^T I \left( {\hat \theta}_n - \theta_0 \right) + o_p\left( \tfrac 1{\sqrt n} \right).
\end{equation}
This means that ${\tilde \theta}_n - \theta_0$ may be viewed as a projection of ${\hat \theta}_n - \theta_0$ into $[L],$ approximately. In other words, $\tilde\theta_n$ tends to be closer to the true value $\theta_0$ than ${\hat \theta}_n$ in the metric induced by $I.$
\end{rem}

\section{Examples}\label{E}
Our construction of (semi)parametrically efficient estimators will be illustrated in this section by some examples, all of which have been discussed also in Section 5 of the companion paper \cite{KlaassenSusyanto15}.

\begin{exam}\label{locationscale}\textbf{Coefficient of variation known}

Let $g(\cdot)$ be an absolutely continuous density on $({\mathbb R}, \cal{B})$ with mean 0, variance 1, distribution function $G,$ and derivative $g'(\cdot),$ such that $\int [1+x^2] (g'/g(x))^2 g(x) dx$ is finite. Consider the location-scale family corresponding to $g(\cdot).$ Let there be given efficient estimators ${\bar \mu}_n$ and ${\bar \sigma}_n$ of $\mu$ and $\sigma,$ respectively, based on $X_1, \dots, X_n,$ which are i.i.d. with density $\sigma^{-1}g((\cdot - \mu)/\sigma).$ By $I_{ij}$ we denote the element in the $i$the row and $j$th column of the matrix $I= \sigma^2I(\theta,G,{\cal P}),$ where the Fisher information matrix $I(\theta,G,{\cal P})$ is as defined in (\ref{matrixboundP}) with $\theta =(\mu,\sigma)^T.$ Some computation shows $I_{11}= \int (g'/g)^2 g, I_{12}=I_{21}=\int x(g'/g(x))^2 g(x) dx,$ and $I_{22}= \int [xg'/g(x) + 1]^2 g(x) dx$ exist and are finite; cf. Section I.2.3 of \cite{Hajek67}.

We consider the submodel with the coefficient of variation $\sigma/\mu$ known to be equal to a given constant $c.$ We may put this constraint in a linear form by choosing $S(\theta) = c\theta_1 - \theta_2.$ By Remark \ref{remarklinear}  and Example 5.1 of \cite{KlaassenSusyanto15} this implies that the efficient estimator $\tilde{\theta}_n$ of $\theta$ within the constraint model $\cal Q$ from Theorem \ref{theoryefficiency} equals
\begin{equation}\label{cknowngeneral1}
\tilde{\theta}_n = (\hat{\mu}_n, c \hat{\mu}_n)^T
\end{equation}
with
\begin{equation}\label{cknowngeneral2}
\hat{\mu}_n = \left(I_{11} + 2cI_{12} + c^2 I_{22}\right)^{-1}\left[\left(I_{11} + cI_{12}\right) \bar{\mu}_n + \left(I_{12} + cI_{22}\right) \bar{\sigma}_n \right].
\end{equation}
Similar relations hold for the symmetric and normal cases as discussed in Example 5.1 of \cite{KlaassenSusyanto15}.
Note that one gets another, but still efficient estimator of $\theta,$ if one formulates the constraint in a nonlinear way. Choosing e.g. $S(\theta) = \theta_2/\theta_1 - c,$ we arrive by Theorem \ref{theoryefficiency} at $\theta_n^* = (\mu_n^*, \sigma_n^*)^T,$ where straightforward computations with ${\bar c}_n = {\bar \sigma}_n/{\bar \mu}_n$ yield
\begin{eqnarray}\label{cknowngeneral3}
\lefteqn{\mu_n^* = \left(I_{11} + 2{\bar c}_n I_{12} + {\bar c}_n^2 I_{22}\right)^{-1} } \\
&& \qquad \qquad \qquad \left[\left(I_{11} + \{2{\bar c}_n -c\}I_{12}\right) \bar{\mu}_n + \left(I_{12} + \{2{\bar c}_n -c\}I_{22}\right) \bar{\sigma}_n \right] \nonumber
\end{eqnarray}
and
\begin{eqnarray}\label{cknowngeneral3}
\lefteqn{\sigma_n^* = \left(I_{11} + 2{\bar c}_n I_{12} + {\bar c}_n^2 I_{22}\right)^{-1} } \\
&& \qquad \qquad \qquad \left[\left(cI_{11} + c {\bar c}_n I_{12}\right) \bar{\mu}_n + \left({\bar c}_n I_{12} + {\bar c}_n^2 I_{22}\right) \bar{\sigma}_n \right]. \nonumber
\end{eqnarray}
Indeed, this estimator is asymptotically equivalent to the one from (\ref{cknowngeneral1}), but the corresponding coefficient of variation does not equal $c.$ The projection from (\ref{efficientestimator}) of $\theta_n^* = (\mu_n^*, \sigma_n^*)^T$ yields ${\tilde \theta}_n = ({\tilde \mu}_n, c{\tilde \mu}_n)^T$ with
\begin{eqnarray}\label{cknowngeneral3}
\lefteqn{{\tilde \mu}_n = \left(I_{11} + 2{\bar c}_n I_{12} + {\bar c}_n^2 I_{22}\right)^{-1} } \\
&& \qquad  \left[\left(I_{11} + \tfrac {2{\bar c}_n -c + c^2{\bar c}_n}{1+c^2} I_{12}\right) \bar{\mu}_n + \left(\tfrac{1+c {\bar c}_n}{1+c^2} I_{12} + \tfrac {2{\bar c}_n - c + c {\bar c}_n^2}{1+c^2} I_{22}\right) \bar{\sigma}_n \right], \nonumber
\end{eqnarray}
which is asymptotically equivalent to $\hat{\mu}_n,$ but differs from it.
\end{exam}

\begin{exam}\label{Gaussiancopula}\textbf{Exchangeable Gaussian copula model}

Let $$\mathbf{X}_1=(X_{1,1},\ldots,X_{1,m})^T,\ldots,\mathbf{X}_n=(X_{n,1},\ldots,X_{n,m})^T$$ be i.i.d. copies of $\mathbf{X}=(X_1,\ldots,X_m)^T$. For $i=1,\ldots,m$, the marginal distribution function of $X_i$ is continuous and will be denoted by $F_i.$ It is assumed that $(\Phi^{-1}(F_1(X_1)), \dots, \Phi^{-1}(F_m(X_m)))^T$ has an $m$-dimensional normal distribution with mean 0 and positive definite correlation matrix $C(\theta),$ where $\Phi$ denotes the one-dimensional standard normal distribution function. Here the parameter of interest $\theta$ is the vector in $\mathbb{R}^{m(m-1)/2}$ that summarizes all correlation coefficients $\rho_{rs},\, 1\leq r < s \leq m$. We will set this general Gaussian copula model as our semiparametric starting model $\cal P$, i.e.,
\begin{equation}\label{generalGaussiancopula}
{\cal P}=\{P_{\theta,G}\ :\ \theta=(\rho_{12},\dots,\rho_{(m-1)m})^T\ , G =(F_1(\cdot),\dots,F_m(\cdot))\in {\cal G}\}.
\end{equation}
As argued in \cite{KlaassenSusyanto15} the Van der Waerden or normal scores rank correlation coefficient
\begin{equation}\label{normalscores}
\hat\rho_{rs}^{(n)}=\frac{\frac1n\sum\limits_{j=1}^{n}{\Phi^{-1}\left(\frac n{n+1}\mathbb{F}_r^{(n)}(X_{j,r})\right)\Phi^{-1}\left(\frac n{n+1}\mathbb{F}_s^{(n)}(X_{j,s})\right)}}{\frac1n\sum\limits_{j=1}^{n}{\left[\Phi^{-1}\left(\frac j{n+1}\right)\right]^2}}
\end{equation}
with $\mathbb{F}_r^{(n)}$ and $\mathbb{F}_s^{(n)}$ being the marginal empirical distributions of $F_r$ and $F_s$, respectively, $1\leq r < s \leq m,$ is a semiparametrically efficient estimator of $\rho_{rs}$ with efficient influence function
\begin{align}
\tilde\ell_{\rho_{rs}}(X_r,X_s)&=\Phi^{-1}\left(F_r(X_r)\right)\Phi^{-1}\left(F_s(X_s)\right)\\
&\ \ \ -\tfrac 12 \rho_{rs} \left\{\left[\Phi^{-1}\left(F_r(X_r)\right)\right]^2
+\left[\Phi^{-1}\left(F_s(X_s)\right)\right]^2\right\}\notag.
\end{align}
This means that
\begin{equation}\label{estimatorthetaexc}
\hat\theta_n=(\hat \theta_{n1}, \dots,\hat \theta_{nk})^T = (\hat\rho_{12}^{(n)},\dots,\hat\rho_{(m-1)m}^{(n)})^T, \quad k=m(m-1)/2,
\end{equation}
efficiently estimates $\theta$ within $\cal P$ with efficient influence function
\begin{equation}\label{estimatorinfthetaexc}
\tilde\ell(\mathbf{X}; \theta,G,{\cal P})=(\tilde\ell_{\rho_{12}}(X_1,X_2),\ldots,\tilde\ell_{\rho_{(m-1)m}}(X_{m-1},X_m))^T.
\end{equation}

The submodel
\begin{equation}\label{exchangeablemodel}
{\cal Q} = \left\{ P_{\theta,G}\,:\, \theta = {\bf 1}_k\rho,\ \rho \in (-1/(m-1),1),\ G\in {\cal G}\right\} \subset {\cal P}
\end{equation}
with ${\bf 1}_k$ indicating the vector of ones of dimension $k$ is the exchangeable $m$-variate Gaussian copula model. In this submodel all correlation coefficients have the same value $\rho.$

With $J_k$ the $k\times k$ identity matrix we choose $R = J_k - \tfrac 1k {\bf 1}_k {\bf 1}_k^T$ and $\alpha =0$ in Remark \ref{remarklinear} and obtain
\begin{equation}\label{rhobar}
{\tilde \theta}_n = {\bf 1}_k {\bar \theta}_n = {\bf 1}_k {\bar \rho}_n, \quad {\bar \theta}_n={\bar \rho}_n= \tfrac 1k \sum_{r=1}^{m-1} \sum_{s=r+1}^m {\hat\rho}_{rs}^{(n)} = \tfrac 1k \sum_{j=1}^k {\hat \theta}_{nj},
\end{equation}
as efficient estimator of $\theta$ within submodel $\cal Q.$
\end{exam}

\begin{exam}\label{regression}\textbf{Partial spline linear regression}

 As in Example 5.3 of \cite{KlaassenSusyanto15} the observations are realizations of i.i.d. copies of the random vector $X=(Y,Z^T,U^T)^T$ with $Y, Z,$ and $U$ 1-dimensional, $k$-dimensional, and $p$-dimensional random vectors with the structure
\begin{equation}\label{partialspline}
Y = \theta^T Z + \psi(U) + \varepsilon,
\end{equation}
where the measurement error $\varepsilon$ is independent of $Z$ and $U,$ has mean 0, finite variance, and finite Fisher information for location, and where $\psi(\cdot)$ is a real valued function on ${\mathbb R}^p.$ The distribution function of $Z, U,$ and $\varepsilon$ and the function $\psi(\cdot)$ together constitute the nuisance parameter $G$ whereas $\theta$ is the parameter of interest. \cite{Schick93} presents an efficient estimator of $\theta$ and a consistent estimator of $I(\theta, G, {\cal P})$ in his Theorem 8.1. Consequently our Theorem \ref{theoryefficiency} may be applied directly in order to obtain an efficient estimator of $\theta$ in appropriate submodels $\cal Q$ without our construction of an estimator of $I(\theta, G, {\cal P})$ via characteristic functions. In the linear case of Remark \ref{remarklinear} the parameter of interest $\theta$ within the submodel $\cal Q$ may be reparametrized by $\theta = \alpha + L\nu$ with the vector $\alpha$ and the matrix $L$ known. Now $\nu$ is the parameter of interest and we return to the situation of (\ref{partialspline}) with $X=(Y-\alpha^T Z, Z^T L, U^T)^T.$
\end{exam}

\begin{exam}\label{commonmean}\textbf{Multivariate normal with common mean}

Let $\cal G$ be the collection of nonsingular $k\times k$-covariance matrices and let the parametric starting model be the collection of nondegenerate normal distributions with mean vector $\theta$ and covariance matrix $\Sigma,$
\begin{equation}\label{normalmodel}
{\cal P} = \left\{ P_{\theta,\Sigma} \,:\, \theta \in {\mathbb R}^k,\ \Sigma \in {\cal G} \right\}.
\end{equation}
Efficient estimators of $\theta$ and $\Sigma$ are the sample mean ${\bar X}_n = n^{-1}\sum_{i=1}^n X_i$ and the sample covariance matrix ${\hat \Sigma}_n = (n-1)^{-1} \sum_{i=1}^n (X_i - {\bar X}_n)(X_i - {\bar X}_n)^T,$ respectively. Note that ${\bar X}_n$ attains the finite sample Cram\'er-Rao bound and the asymptotic information bound with $I(\theta, \Sigma, {\cal P}) = \Sigma^{-1}.$

The parametric submodel we consider is
\begin{equation}\label{normalsubmodel}
{\cal Q} = \left\{ P_{\theta,\Sigma} \,:\, \theta \in {\mathbb R}^k,\ \theta= {\bf 1}_k \frac 1k \sum_{j=1}^k \theta_j,\ \Sigma \in {\cal G} \right\},
\end{equation}
in which all marginals of each distribution have the same mean. In view of (\ref{efficientestimatorlinearcase2}) with $L={\bf 1}_k$
\begin{equation}\label{estimatorcommonmeans}
\tilde \theta_n={\bf 1}_k \left(\mathbf{1}_k^T{\hat{\mathbf\Sigma}}_n^{-1} \mathbf{1}_k \right)^{-1}\mathbf{1}_k^T{\hat{\mathbf\Sigma}}_n^{-1}\mathbf{\bar X_n}
\end{equation}
is an efficient estimator of $\theta$ within $\cal Q,$ which attains the asymptotic information bound $\left(\mathbf{1}_k^T \Sigma^{-1} \mathbf{1}_k \right)^{-1}\mathbf{1}_k \mathbf{1}_k^T .$
See also Example 5.4 of \cite{KlaassenSusyanto15}.
\end{exam}

\begin{exam}\label{label}\textbf{Restricted maximum likelihood estimator}

Maximum likelihood estimation of the generalized linear model under linear restrictions on the parameters is done in \cite{Nyquist91} via an iterative procedure using a penalty function. \cite{Kim95} introduce the restricted EM algorithm for maximum likelihood estimation under linear restrictions. \cite{Jamshidian04} compares the performance of the gradient projection and of the expectation-restricted-maximization (ERM) method under linear restrictions. Our approach as described in Remark \ref{remarklinear} with ${\hat \theta}_n$ a(n unrestricted) maximum likelihood estimator avoids such iterative procedures, provided $\hat \theta_n$ can be computed without iterations. Moreover, Theorem \ref{theoryefficiency} is not constrained to linear restrictions.
\end{exam}

\section{Conclusion}\label{conclusion}
In this paper, we have shown that the efficient influence function for estimation of $\theta$ within the semiparametric model
$${\cal Q}=\left\{P_{\theta, G}\ :\ S(\theta)=0, \theta \in \Theta, G\in {\cal G} \right\}$$
can be obtained by projecting the efficient influence function for estimation of $\theta$ within the unconstrained model
$${\cal P}=\left\{P_{\theta, G}\ :\ \theta \in \Theta,\  G \in {\cal G} \right\}.$$
It follows that these influence functions are related by
\begin{eqnarray*}
\lefteqn{\tilde\ell(\theta,G,{\cal Q})} \\
&& = \left(J-I^{-1}(\theta,G,{\cal P}){\dot S}(\theta)^T\left({\dot S}(\theta) I^{-1}(\theta,G,{\cal P}) {\dot S}(\theta)^T \right)^{-1}{\dot S}(\theta)\right)\tilde\ell(\theta,G,{\cal P})
\end{eqnarray*}
and hence the corresponding efficient lower bounds by
\begin{eqnarray*}
\lefteqn{I^{-1}(\theta,G,{\cal Q}) = I^{-1}(\theta,G,{\cal P})}\\
&& -I^{-1}(\theta,G,{\cal P}){\dot S}(\theta)^T\left({\dot S}(\theta) I^{-1}(\theta,G,{\cal P}) {\dot S}(\theta)^T \right)^{-1}{\dot S}(\theta)I^{-1}(\theta,G,{\cal P}).
\end{eqnarray*}
Furthermore, Theorem \ref{theoryefficiency} provides a simple method to upgrade an asymptotically efficient estimator for $\theta$ within the unconstrained model to an efficient estimator within the constrained model.

\appendix

\section{Existence of a Path with a Given Direction}\label{EPGD}
Given a continuously differentiable function $S:\Theta\subset\mathbb{R}^k\mapsto \mathbb{R}^d$ with $k>d$. Define
$$
{\cal M}=\{\theta\in\Theta: S(\theta)=0\}
$$
and let $\theta_0\in{\cal M}$ be such that the Jacobian of the function $S$ at $\theta_0$, say ${\dot S}(\theta_0),$ has full-rank $d$.
Suppose that $r\in\mathbb{R}^k$ with ${\dot S}(\theta_0)r=0.$ We would like to construct a path through $\theta_0$ with direction $r$.

Note that according to the Implicit Function Theorem, there exists an open subset $U \subset \mathbb{R}^{k-d},\, 0 \in U,$ and a unique continuously differentiable function $\phi : U \to \mathcal{M}$ with $\phi(0) = \theta_0$ (usually, called parametrization). If ${\dot \phi}_0$ denotes the Jacobian of the function $\phi$ at 0, then the chain rule gives
$${\dot S}(\theta_0){\dot \phi}_0=0$$
in view of $S(\phi(u))=0$ for every $u\in U.$ This implies
$$im({\dot \phi}_0)\subset{\dot S}(\theta_0)^\bot.$$
Since $dim(im({\dot \phi}_0))=k-d=dim({\dot S}(\theta_0)^\bot)$ we obtain
$$im({\dot \phi}_0)={\dot S}(\theta_0)^\bot.$$
Consequently, the direction $r$ has to belong to $im({\dot \phi}_0),$ which means that there exists a $\nu\in  U$ with ${\dot \phi}_0\nu=r.$
Now define a path
$$\{\theta_\eta\}=\{\phi(\eta \nu)\ : \ \eta\in\mathbb{R},\ |\eta|<\varepsilon\}$$
for sufficiently small $\varepsilon>0,$ which obviously passes through $\theta_0$ because of $\phi(0)=\theta_0$. Then, we have
\begin{align*}
\left|\theta_\eta-\theta_0-\eta r\right|&=\left|\phi(\eta \nu)-\phi(0)-\eta r\right|\\
&\leq\left|\eta{\dot \phi}_0 \nu-\eta r\right|+o(|\eta|)\\
&=o(|\eta|).
\end{align*}

\end{document}